\documentclass{amsart}
\usepackage{amsmath,amsthm,amssymb,IMjournal}

\usepackage{graphics,graphicx}
\usepackage{mathrsfs, bm}
\usepackage{color}

\begin{document}
\newtheorem{theorem}{Theorem}[section]
\newtheorem{dfn}{Definition}[section]
\newtheorem{lemma}{Lemma}[section]
\newtheorem{remark}{Remark}[section]

\numberwithin{equation}{section}

\title{ESTIMATES TO THE WEAK SOLUTION OF THE ELECTRO-HYDRODYNAMICAL BOUNDARY VALUE PROBLEM FOR THE UNIT CELL OF CATION-EXCHANGE MEMBRANE. THE CASE OF NONZERO DEBYE RADIUS.}

\author{Yulia Koroleva
\\ Department of Higher Mathematics,  HSE (National Research University) }



\abstract
  We study a model problem on the filtration of a conducting fluid through a porous layer. A porous medium is presented as an
assemblage of identical spherical cells. Each cell consists of a porous core and liquid shell. We show the dependence of each flow parameter on the Debye radius  which characterizes how far the influence of a charge extends in electrolyte. The common case of finite Debye radius in comparison to the cell radius is analyzed. We derive apriori estimates for flow characteristics which show the specific behavior of the fluid. The boundedness of velocity field, pressure, electric potential and ion flux densities was proved. 

\endabstract

\keywords
   fluid flow, porous medium, weak solution, Debye radius
\endkeywords

\subjclass
76D03, 76D07, 76D10, 76S99
\endsubjclass


\section{Introduction}\label{sec1}
The current research is devoted to a filtration process through a porous layer. This topic was investigated by physicists and chemists.

In particular, professor A.N. Filippov has proposed a new method for calculating the density of solvent – $U,$ solute – $J,$ and electric current – $I$ fluxes through an ion-exchange membrane under the simultaneous action of external pressure gradients $p,$ chemical $\mu$ (electrolyte concentration $C$), and electric potential $\varphi$ (see  \cite{Filippov1}). In \cite{Filippov1}, the cell model of an ion-exchange membrane consisting of porous charged particles-balls of the same radius is constructed, the problem of finding the kinetic coefficients $L_{ij}$ of the Onsager matrix is posed and solved in general, and an exact algebraic formula for the hydrodynamic permeability $L_{11}$ of the membrane is obtained. In \cite{Filippov2}, the electroosmotic permeability $L_{12}$ and the specific electrical conductivity $L_{22}$ of the ion exchange membrane were calculated. In \cite{Filippov3}, new formulas are obtained for the integral diffusion permeability $L_{33}$ and electrodiffusion coefficient $L_{23}$ of a charged membrane in equilibrium with an aqueous solution of a binary electrolyte. The cell model was successfully verified using experimental data on the electrical conductivity and electroosmotic permeability of an aqueous solution of hydrochloric acid through the pristine cation-exchange membrane MF-4SC and that modified by halloysite nanotubes and platinum or iron nanoparticles \cite{Filippov4}. It is shown that with an increase in the equilibrium concentration of the electrolyte, the total permeability of the membrane also increases due to both barofiltration and electroosmotic transfer of the solvent. However, in \cite{Filippov1}--\cite{Filippov4}, the electric double layer (EDL) at the interface between a porous particle and a pure liquid was effectively replaced by jumps in the electric potential and concentration when crossing this boundary. THe mentioned results was done under the assumption that the thikness of electric double layer is small in comparison with the radius of the particle.

In present paper, we investigate a more general problem on analysis of flow characteristics in the case when the thickness of the electric double layers is not neglected.  This approach leads to analysis of the Poisson-Boltzmann equation in conjunction with the Stokes equations for a conducting viscous incompressible liquid (electrolyte solution) and the Brinkman equations for a porous medium. The mentioned system of equations can not be solved analytically in general. However,  it was possible to obtain estimates for the liquid velocity, ion flux density, electric potential, and concentration, depending on the ratio between the dimensionless thickness of the electric double layer, the Peclet number, and other parameters of the boundary value problem set for a single cell of the membrane.
Let us mention that the problem studied in the present research was completely new. Some  related statements of problems were investigated in papers \cite{rel1} --\cite{rel3}. The existence of global weak solutions for the Nernst-Planck-Poisson problem which
describes the evolution of concentrations of charged species, subject to Fickian diffusion
and chemical reactions in the presence of an electrical field was proved in \cite{rel1}. The paper \cite{rel2} considers ionic electrodiffusion in fluids, described by the Nernst–Planck–Navier–Stokes system. It was proved that the system has global a smooth solutions for
arbitrary smooth data in bounded domains with a smooth boundary in three space
dimensions for the following cases: an arbitrary positive Dirichlet boundary conditions for the ionic concentrations, arbitrary Dirichlet boundary
conditions for the potential, arbitrary positive initial concentrations, and arbitrary
regular divergence-free initial velocities. The parameter Debye length characterizes how far the influence of a charge extends in electrolyte. The analysis of the limit of vanishing Debye length for ionic diffusion in fluids, described by the Nernst–Planck–Navier–Stokes system was carried out in \cite{rel3}. In the asymptotically stable
cases of blocking (vanishing normal flux) and uniform selective (special Dirichlet)
boundary conditions for the ionic concentrations, the authors proved that the ionic charge
density  converges in time to zero in the interior of the domain, in the limit of
vanishing Debye length.
The current research is aimed to consider all possible cases for nonzero Debye radius. The main new investigations are as follows. Depending on the Debye radius, the norms of electric potential are estimated via the norms of concentrations. The obtained estimates show that the bigger the parameter $\delta,$ the less the influence of concentration. The effect of concentration is not much significant when the Debye radius $\delta$ is much bigger than size of outer layer. 

Based on the estimates of electric potential, we have obtained the bounds for the pressure and velocity.
The main result also is the bounds for the hydrodynamic permeability $L_{11}$ and ion flux densities depending on the Debye radius, geometrical characteristics of the porous cell and the velocity of the incoming flow. We have analyzed the dependence of flow characteristics on Debye radius and on the other parameter Peclet number. It compares the rate of heat transfer via convection to heat transfer via diffusion. Moreover, the ratio between Debye radius and Peclet number was analyzed.

\section{Statement of the problem}

Consider a porous cell $\Omega=\Omega^i\cup\Omega^o\in \mathbb{R}^3$ which  is a spherical particle:
$$\Omega^i=\{ 0<r<a, \varphi\in[0,2\pi], \theta\in[-\pi,\pi]\},\ \Omega^o=\{ a<r<b, \varphi\in[0,2\pi], \theta\in[-\pi,\pi]\}.$$

Its boundary is denoted by $\partial\Omega=\Gamma^i\cup\Gamma^o,$ where
$$\Gamma^i=\{r=a\}, \Gamma^o=\{r=b\}.$$  
Flow of fluid (electrolyte) in the outer domain $\Omega^o$ can be described by Stokes equation under low Reynold's number and which involves also electromass force:
\begin{equation}\label{St_eq}
\nabla {{p}^{o}}={{\text{ }\!\!\mu\!\!\text{ }}^{o}}\Delta {{\mathbf{v}}^{o}}-{{\text{ }\!\!\rho\!\!\text{ }}^{o}}\nabla {{\text{ }\!\!\varphi\!\!\text{ }}^{o}},
\end{equation}																								
where
\begin{equation}\label{ro}
{{\text{ }\!\!\rho\!\!\text{ }}^{o}}={{F}_{0}}\left( {{Z}_{+}}C_{+}^{o}-{{Z}_{-}}C_{-}^{o} \right)
\end{equation}	
is the  volumetric density of movable electric charges in a porous particle,  ${{Z}_{\pm }}$ are charge modules of cations and anions of the electrolyte, $C_{\pm }^{o}$ are concentrations of cations and anions; ${{F}_{0}}$ is Faraday constant.
 We assume that the liquid is incompressible:
 \begin{equation}\label{divvo}
\nabla \cdot {{\mathbf{v}}^{o}}=0.																														
\end{equation}
Here ${{p}^{o}}$ is local pressure, ${{\mathbf{v}}^{o}}$ is velocity vector, ${{\text{ }\!\!\mu\!\!\text{ }}^{o}}$ is a dynamic viscosity, ${{\text{ }\!\!\varphi\!\!\text{ }}^{o}}$ is the electric potential which satisfies to the Poisson equation:
\begin{equation}\label{Pois_out}
\Delta {{\text{ }\!\!\varphi\!\!\text{ }}^{o}}\text{=}-\frac{{{\text{ }\!\!\rho\!\!\text{ }}^{o}}}{\text{ }\!\!\varepsilon\!\!\text{ }{{\text{ }\!\!\varepsilon\!\!\text{ }}_{0}}},
\end{equation}																													
where $\text{ }\!\!\varepsilon\!\!\text{ }$ is the  relative  permittivity of the medium, ${{\text{ }\!\!\varepsilon\!\!\text{ }}_{0}}$is the dielectric constant.
In general stationary case system (\ref{St_eq})--(\ref{Pois_out}) must contain equations of charge conservations:
\begin{equation}\label{J}
\nabla \cdot \mathbf{J}_{\pm }^{o}=0,
\end{equation}																													
where $\mathbf{J}_{\pm }^{o}$ are ions flux densities and which can be written due to  the following Nernst representation:
$$\mathbf{J}_{\pm }^{o}={{\mathbf{v}}^{o}}C_{\pm }^{o}-{{D}_{\pm }}\left( \nabla C_{\pm }^{o}\pm {{Z}_{\pm }}C_{\pm }^{o}\nabla {{\text{ }\!\!\varphi\!\!\text{ }}^{o}}\frac{{{F}_{0}}}{RT} \right).$$																		
Here ${{D}_{\pm }}$ is the coefficient of ion's diffusion in the fluid, $R$ is the gas constant, $T$ is the absolute temperature.

Fluid flow is subjected to the Brinkman's equation with mass electric force in the inner domain  $\Omega^i:$
\begin{equation}\label{Brinkman}\left( a<r<b \right)
\nabla {{p}^{i}}={{\text{ }\!\!\mu\!\!\text{ }}^{i}}\Delta {{\mathbf{v}}^{i}}-{{\text{ }\!\!\rho\!\!\text{ }}^{i}}\nabla {{\text{ }\!\!\varphi\!\!\text{ }}^{i}}-k{{\mathbf{v}}^{i}},	
\end{equation}																						
where
\begin{equation}\label{rho}
{{\text{ }\!\!\rho\!\!\text{ }}^{i}}={{F}_{0}}\left( {{Z}_{+}}C_{+}^{i}-{{Z}_{-}}C_{-}^{i} \right)	
\end{equation}																								
is the volumetric  density of movable electric charges in a porous particle, $k$ is the Brinkman's constant which is inversely proportional to the particle's permeability. Here ${{\text{ }\!\!\mu\!\!\text{ }}^{i}}$ is the coefficient of  Brinkman's viscosity.  Brinkman's liquid is assumed to be incompressible as well:
\begin{equation}\label{divvi}
\nabla \cdot {{\mathbf{v}}^{i}}=0.
\end{equation}																														
Electric potential satisfies to Poisson's equation:
\begin{equation}\label{Pois_in}
\Delta {{\text{ }\!\!\varphi\!\!\text{ }}^{i}}\text{=}-\frac{\left( {{\text{ }\!\!\rho\!\!\text{ }}^{i}}-{{\text{ }\!\!\rho\!\!\text{ }}_{V}} \right)}{\text{ }\!\!\varepsilon\!\!\text{ }{{\text{ }\!\!\varepsilon\!\!\text{ }}_{0}}},						
\end{equation}																				
where ${{\text{ }\!\!\rho\!\!\text{ }}_{V}}$ is the bulk density of the fixed charges. We model our membrane such that the particle has negative charge, then ${{\text{ }\!\!\rho\!\!\text{ }}_{V}}>0$. For the convenience of our analysis we will assume the equality of permittivity for the liquid and Brinkman's medium in Poisson's equations (\ref{Pois_in}) and (\ref{Pois_out}).  This condition lets us do not take into account the Maxwel's stress tensor in boundary conditions since it keeps continuous automatically under the considered case. Besides already stated equations, the following  charge conservation equations must be valid:
\begin{equation}\label{Ji}
\nabla \cdot \mathbf{J}_{\pm }^{i}=0,
\end{equation}																														
where $\mathbf{J}_{\pm }^{i}$ are  densities of ion's fluxes in the porous particle,
$$\mathbf{J}_{\pm }^{i}={{\mathbf{v}}^{i}}C_{\pm }^{i}-{{D}_{m\pm }}\left( \nabla C_{\pm }^{i}\pm {{Z}_{\pm }}C_{\pm }^{i}\nabla {{\text{ }\!\!\varphi\!\!\text{ }}^{i}}\frac{{{F}_{0}}}{RT} \right).$$																	
Here ${{D}_{m\pm }}$ are diffusion coefficients of electrolyte ions inside the porous particle.
\subsection{Boundary conditions}

We assume the following natural boundary conditions.

I.\  On the border  $r=a$ one must have:
\begin{itemize}
\item	Continuity of velocity field:   ${\mathbf {v}}^o={\mathbf {v}}^i;$
\item	Continuity of stress tensor: $\Sigma^o=\Sigma^i;$
\item	Continuity of the radial component of ion's fluxes:
$\mathbf{{J}}_{\pm }^{\text{o}}\cdot \mathbf{n}=\mathbf{J}_{\pm }^{\text{i}}\cdot \mathbf{n}.$
\item	Continuity of the electric potential $\varphi $ and both components of the electric field strength vector: $\varphi^o=\varphi^i,$ $\nabla \varphi^o=\nabla\varphi^i;$
\end{itemize}
II.\ On the boundary  $r=b$ it is required:\\
\begin{itemize}
\item	Cunningham conditions on the velocity:
${\mathbf {v}}^o={\mathbf {U}};$
\item	Zero  gradient of ion concentrations (chemical potentials):
$${{\left. C_{\text{ }\!\!\pm\!\!\text{ }}^{\text{o}} \right|}_{r=b,\theta =0}}={{\left. C_{\text{ }\!\!\pm\!\!\text{ }}^{\text{o}} \right|}_{r=b,\theta =\pi }}.$$
\item	Zero  gradient of electric potential:
$${{\left. {{\varphi }^{\text{o}}} \right|}_{r=b,\theta =0}}={{\left. {{\varphi }^{\text{o}}} \right|}_{r=b,\theta =\pi }}$$
\end{itemize}
\subsection{Boundary-value problems in the dimensionless form}
For a given incoming velocity ${\mathbf {U}} $ we denote its absolute value by $U:=|{\mathbf {U}} |.$ Let $U_0$ be a characteristic value of the filtration velocity.
We make the change of variables to get all variables and constants in dimensionless form:
$$\tilde{r}=\frac{r}{a},\,\mathbf{\tilde{v}}=\frac{{\mathbf{v}}}{{U_0}},\,\tilde{p}=\frac{p}{{p}_{0}},\,\widetilde{\sigma}_{r\theta}=\frac{{\sigma}_{r\theta}}{p_0},\,\widetilde{\sigma}_{rr}=\frac{{\sigma}_{rr}}{p_0},\,\widetilde{C}_{\pm}=\frac{C_{\pm}}{C_0},
\widetilde{\varphi}=\frac{\varphi F_0}{RT},$$
$${{\mathbf{j}}_{\pm }}\,=\frac{{{\mathbf{J}}_{\pm }}}{{{U_0}}{{C}_{0}}},\tilde{\nabla }=a\nabla ,\,\,\tilde{\Delta }={{a}^{2}}\Delta ,\, \nu_{\pm}=\frac{D_0}{D_{\pm}},\  \nu_{m\pm}=\frac{D_0}{D_{m\pm}},\ \text{Pe}=\frac{a{{U_0}}}{{{D}_{0}}},\ \sigma=\frac{\rho_V}{F_0 C_0}$$
$${{p}_{0}}=RT{{C}_{0}},\,{{U_0}}=\frac{a p_0}{\mu^o},\ \delta=\frac{d}{a},\ m=\frac{\mu^i}{\mu^o},\ s^2=\frac{a^2k}{\mu^i},\ s_0^2=ms^2=\frac{a^2}{R_b^2},$$
where ${{R}_{b}}=\sqrt{\frac{\mu^o}{k}}$ is the Brinkman's radius (characteristic thickness of the filtration layer), ${{C}_{0}}={{C}_{e\pm }}{{Z}_{\pm }}$ is the equivalent concentration of electrolyte equilibrated with a membrane, ${{D}_{0}}$ is diffusion coefficient scale, $d=\left(\frac{C_0F_0^2}{\varepsilon\varepsilon_0 RT}\right)^{-\frac{1}{2}}$ is the Debye radius.
Passing finally to dimensionless variables  (we shall omit tilda sign do not heavy the notations), we got the following systems of differential equations which describe our filtration process:  \\


\begin{equation}\label{Pr_in}
\begin{aligned}
&\nabla {{p}^{\kappa}}=m_{\kappa}\Delta {{\mathbf{v}^{\kappa}}}-\left( {{Z}_{+}}C^{\kappa}_{+}-{{Z}_{-}}C^{\kappa}_{-} \right)\nabla {{\text{ }\!\!\varphi^{\kappa}\!\!}}-{\mu}_{\kappa}{{\mathbf{v}^{\kappa}}},\\
&\nabla \cdot {{\mathbf{v}^{\kappa}}}=0,\\
&{{\text{ }\!\!\delta\!\!\text{ }}^{2}}\Delta {{\text{ }\!\!\varphi^{\kappa}\!\!\text{ }}}=-\left( {{Z}_{+}}C^{\kappa}_{+}-{{Z}_{-}}C^{\kappa}_{-}-\text{ }\!\!\sigma_{\kappa}\!\!\text{ } \right)\\					
&\nabla \cdot \mathbf{j}^{\kappa}_{\pm }=0,\\
&\mathbf{j}^{\kappa}_{\pm }={{\mathbf{v}^{\kappa}}}C^{\kappa}_{\pm }-\frac{1}{{{\text{ }\!\!\nu^{\kappa}\!\!\text{ }}}\text{Pe}}\left( \nabla C^{\kappa}_{\pm }\pm {{Z}_{\pm }}C^{\kappa}_{\pm }\nabla {{\text{ }\!\!\varphi^{\kappa}\!\!\text{ }}} \right).
\end{aligned}
\end{equation}

Here $${m}_{\kappa}=\begin{cases}& 1, \text{ if } \kappa=o,\\ & m, \text{ if } \kappa=i, \end{cases}\quad \quad
{\mu}_{\kappa}=\begin{cases}& 0, \text{ if } \kappa=o,\\ & ms^2, \text{ if } \kappa=i, \end{cases}$$
$${\sigma}^{\kappa}=\begin{cases}& 0, \text{ if } \kappa=o,\\ & \sigma, \text{ if } \kappa=i, \end{cases}\quad \quad
{\nu}^{\kappa}=\begin{cases}& {\nu}_{\pm}, \text{ if } \kappa=o,\\ & {\nu}_{m\pm }, \text{ if } \kappa=i. \end{cases}$$

Equations for boundary conditions are modified analogously according to the new introduced variables.

 On the border  $r=1$ one must have:
 \begin{itemize}
\item	Continuity of velocity field:   ${\mathbf {v}}^o={\mathbf {v}}^i;$
\item	Continuity of stress tensor: $\Sigma^o=\Sigma^i;$
\item	Continuity of the radial component of ion's fluxes:
$\mathbf{{j}}_{\pm }^{\text{o}}\cdot \mathbf{n}=\mathbf{j}_{\pm }^{\text{i}}\cdot \mathbf{n}.$
\item	Continuity of the electric potential $\varphi $ and both components of the electric field strength vector: $\varphi^o=\varphi^i,$ $\nabla \varphi^o=\nabla\varphi^i;$
\end{itemize}
On the outer boundary  $r=\frac{1}{\gamma}$ it is required:\\
\begin{itemize}
\item	Cunningham conditions on the velocity:
${\mathbf {v}}^o={\mathbf {U}};$
\item	Zero  gradient of ion concentrations:
$${{\left. C_{\text{ }\!\!\pm\!\!\text{ }}^{\text{o}} \right|}_{r=\frac{1}{\gamma},\theta =0}}={{\left. C_{\text{ }\!\!\pm\!\!\text{ }}^{\text{o}} \right|}_{r=\frac{1}{\gamma},\theta =\pi }}.$$
\item	Zero  gradient of electric potential:
$${{\left. {{\varphi }^{\text{o}}} \right|}_{r=\frac{1}{\gamma},\theta =0}}={{\left. {{\varphi }^{\text{o}}} \right|}_{r=\frac{1}{\gamma},\theta =\pi }}$$
\end{itemize}

\section{Preliminaries}

All unknown functions are considered in the weak sense, i.e. as general functions from Sobolev space $H^1$ satisfying a corresponding integral identity.
Let us recall that $H^1(\Omega)$ is a closure of $C^{\infty}$ functions with respect to the $H^1-$ norm
$$\|f\|^2_{H^1}(\Omega):=\int\limits_{\Omega}f^2\,dx\,dy+\int\limits_{\Omega}|\nabla f|^2\,dx\,dy.$$
The notation
${\bf f}\in H^1(\Omega;\mathbb{R}^3)$ means that each component of the  vector-function ${\bf f}=(f_1,f_2,f_3)$ belongs to the same space:  $f_i\in
H^1(\Omega).$ The notation $H^1(\Omega,\Gamma;\mathbb{R}^3)$ is reserved for all functions from the space $H^1(\Omega;\mathbb{R}^3)$ having a zero trace on the set $\Gamma\subset
\partial\Omega.$ Let us denote further the outer normal vector to the boundary $\Gamma$ by ${\bf n}_{\Gamma}=(n^1_{\Gamma},n^2_{\Gamma}, n^3_{\Gamma}).$
For a given vector-function ${\bf f}(x_1,x_2,x_3)=(f_1,f_2,f_3)$ we use the operator $\nabla:$
$$|\nabla \mathbf f|^2=\sum\limits_{i,j=1}^3\frac{\partial f_i}{\partial x_j}\frac{\partial f_i}{\partial x_j}$$
In particular, ${\bf n}_{\partial \Omega_i}$ means the outer normal vector to the domain $\Omega_i.$
Define the spaces $$A^{i}=\{{\bf u}\in H^1(\Omega;\mathbb{R}^3):\quad \mathrm{div}{\bf \mathbf u}=0\}$$ and
$$A^{o}=\{{\bf u}\in H^1(\Omega;\mathbb{R}^3): \quad \mathrm{div}{\bf \mathbf u}=0,{\bf \mathbf u}={\bf \mathbf U}\text{
on } \Gamma^o\}.$$

The following Friedrich's-type inequality holds for functions from $H^1(\Omega;\mathbb{R}^3):$
\begin{equation}\label{Friedr}
\int\limits_{\Omega}{\bf f}^2\,dx\leq d^2_{\Omega}\left(\int\limits_{\Omega}|\nabla{\bf f}|^2\,dx+\int\limits_{\partial\Omega}{\bf f}^2\,dS\right),
\end{equation}
where $d_{\Omega}$ is the diameter of $\Omega.$


\section{Weak solution}

The solution to the system  (\ref{Pr_in}) together with the mentioned boundary conditions is understood in the weak sense, i.e. as a solution to integral identities with an appropriate test-function.

\begin{dfn}
The functions ${\bf v}^{\kappa}\in H^1(\Omega;\mathbb{R}^3)\cap A_1,$  $p^{\kappa}\in H^1(\Omega),$ $\varphi^{\kappa}\in H^1(\Omega),$  $C_{\pm}^{\kappa}\in H^1(\Omega),$  ${\bf j}_{\pm}^{\kappa}\in H^1(\Omega;\mathbb{R}^3)\cap A_1,$   are the weak
solution to (\ref{Pr_in}) iff the following integral identities hold:
\begin{equation}\label{weakvi}
\begin{aligned}
&\int\limits_{\Gamma^{\kappa}}p^{\kappa}{\bf u}^{\kappa}{\bf n}_{\Gamma^{\kappa}}\,dS=-m_{{\kappa}}\int\limits_{\Omega^{\kappa}}\nabla{\bf v}^{\kappa}:\nabla{\bf u}^{\kappa}\,dx+m\int\limits_{\Gamma^{\kappa}}{\bf u}^{\kappa}\frac{\partial {\bf v}^{\kappa} }{\partial{\bf n}_{\Gamma^{\kappa}}}\,dS-\\
&-\int\limits_{\Omega^{\kappa}}\left(Z_{+}C^{\kappa}_{+}-Z_{-}C^{\kappa}_{-}\right)\nabla{\varphi^{\kappa}}{\bf u}^{\kappa}\,dx-ms^2\int\limits_{\Omega^{\kappa}}{\bf v}^{\kappa}{\bf u}^{\kappa}\,dx
\end{aligned}
\end{equation}
for any test-function ${\bf u}^{\kappa}\in H^1(\Omega^{\kappa};\mathbb{R}^3)\cap A_1;$
\begin{equation}\label{weakfii}
\begin{aligned}
&-\delta^2 \int\limits_{\Omega^{\kappa}}\nabla{\varphi^{\kappa}}\nabla { \psi}^{\kappa}\,dx+
\delta^2\int\limits_{\Gamma^{\kappa}}{\psi^{\kappa}}\frac{\partial {\varphi^{\kappa}} }{\partial{\bf n}_{\Gamma^{\kappa}}}\,dS=\\
&=  -\int\limits_{\Omega^{\kappa}}\left(Z_{+}C^{\kappa}_{+}-Z_{-}C^{\kappa}_{-}-\sigma_{\kappa}\right){ \psi}^{\kappa}\,dx
\end{aligned}
\end{equation}
{ for all } ${\psi}^{\kappa}\in H^1(\Omega^{\kappa});$

\begin{equation}\label{divvi}
\int\limits_{\Omega^{\kappa}}{\bf v}^{\kappa}\nabla q^{\kappa}\,dx=\int\limits_{\Gamma^{\kappa}}{ {{\bf v}^{\kappa}}}q^{\kappa}\cdot {\bf n}_{\Gamma^{\kappa}}\,dS,\quad \text{ for all } q^{\kappa}\in L_2(\Omega^{\kappa});
\end{equation}
\begin{equation}\label{divji}
\int\limits_{\Omega^{\kappa}}{\bf j}_{\pm}^{\kappa}\nabla \chi^{\kappa}\,dx=\int\limits_{\Gamma^{\kappa}}{ {{\bf j}_{\pm}}^{\kappa}}\chi^{\kappa}\cdot {\bf n}_{\Gamma^{\kappa}}\,dS,\quad \text{ for all } \chi^{\kappa}\in L_2(\Omega^{\kappa});
\end{equation}

\begin{equation}\label{j0}
\begin{aligned}
&\int\limits_{\Omega^{\kappa}}{\bf j_{\pm}}^{\kappa}{\bf \xi}^{\kappa}\,dx=\int\limits_{\Omega^{\kappa}}{\bf v}^{\kappa}{\bf \xi}^{\kappa}C^{\kappa}_{\pm}\,dx-\frac{1}{\nu_{{\kappa}}Pe}\int\limits_{\Gamma^{\kappa}}
\frac{\partial{ C_{\pm}}^{\kappa}}{\partial{\bf n}_{\Gamma^{\kappa}}}{\bf \xi}^{\kappa}\,dS-\\
&-\frac{1}{\nu_{{\kappa}}Pe}\int\limits_{\Omega^{\kappa}}Z_{\pm}C^{\kappa}_{\pm}\nabla \varphi^{\kappa}{ \bf \xi}^{\kappa}\,dx
 \end{aligned}
\end{equation}
{ for all } ${\bf \xi}^{\kappa}\in H^1(\Omega^{\kappa};\mathbb{R}^3)\cap A_1.$
\end{dfn}

\begin{remark}
If one chose $q^i=q^o=1$ in (\ref{divvi}), then the incoming velocity $U$ and the velocity filed on the common part $\Gamma^i$ satisfies the integral equation
\begin{equation}\label{bondaryU}
\int\limits_{\Gamma^o}{\bf U}n_{\Gamma^o}\,dS=\int\limits_{\Gamma^i}{\bf v}^in_{\Gamma^i}\,dS=0.
\end{equation}
Analogously,  equations
\begin{equation}\label{bondaryj}
\int\limits_{\Gamma^o}{\bf j}^o_{\pm}n_{\Gamma^o}\,dS=\int\limits_{\Gamma^i}{\bf j}^i_{\pm} n_{\Gamma^i}\,dS=0
\end{equation}

hold. These facts are consequence of classical Ostrogradsky-Gauss theorem for a divergence-free vector field.
In addition, Nernst formula and divergence-free  of vectors $\bf v^{\kappa},$ ${\bf j}^{\kappa}_{\pm}$ imply
\begin{equation}\label{bondaryCfi}
{\bf v}^\kappa\nabla C^\kappa_{\pm}-\frac{1}{\nu_{\kappa}Pe}\Delta C^\kappa_{\pm}\mp\frac{Z_{\pm}}{\nu_{\kappa}Pe}\nabla \varphi^\kappa\nabla C_{\pm}^\kappa-\frac{Z_{\pm}}{\nu_{\kappa}Pe}\Delta \varphi^\kappa C_{\pm}^\kappa=0,
\end{equation}
 where $$\kappa=i,o,\quad \nu_{\kappa}=\begin{cases}&\nu, \text{ if } \kappa=o,\\
 &\nu_{m}, \text { if } \kappa=i.
 \end{cases}$$

\end{remark}

\section{Apriori estimates for velocity, pressure and electric potential.}
Let us chose ${\bf u}^o={\bf v}^o$ as a test-function in (\ref{weakvi}). Due to boundary conditions we deduce
\begin{equation}\label{weakv00}
\begin{aligned}                 
&\int\limits_{0}^{2\pi}p^o|_{r=\frac{1}{\gamma}} U\cos\theta\,d\theta-\int\limits_{0}^{2\pi}p^o|_{r=1}{v_r^i}\,d\theta=-\int\limits_{\Omega^o}|\nabla{\bf v}^o|^2\,dx+\frac{1}{2}\int\limits_{0}^{2\pi}\frac{\partial {\bf U}^2 }{\partial r}|_{r=\frac{1}{\gamma}}\,d\theta-\\
&-\frac{1}{2}\int\limits_{0}^{2\pi}\frac{\partial ({\bf v}^i)^2} {\partial r}|_{r=1}\,d\theta
-\int\limits_{\Omega^o}\left(Z_{+}C^o_{+}-Z_{-}C^o_{-}\right)\nabla{\varphi^o}{\bf v}^o\,dx
\end{aligned}
\end{equation}

Applying (\ref{weakvi}) with ${\bf u}^i={\bf v}^i$ we obtain the identity

\begin{equation}\label{weakvii}
\begin{aligned}
&\frac{m}{2}\int\limits_{0}^{2\pi}\frac{\partial {|{\bf v}^i|^2} }{\partial r}|_{r=1}\,d\theta-\int\limits_{0}^{2\pi}p^i|_{r=1}{v_r^i}\,d\theta=m\int\limits_{\Omega^i}|\nabla{\bf v}^i|^2\,dx+\\
&+\int\limits_{\Omega^i}\left(Z_{+}C^i_{+}-Z_{-}C^i_{-}\right)\nabla{\varphi^i}{\bf v}^i\,dx+ms^2\int\limits_{\Omega^i}|{\bf v}^i|^2\,dx
\end{aligned}
\end{equation}

Summing (\ref{weakv00}) with (\ref{weakvii}) divided by $m$ and taking into the account boundary conditions we arrive at the identity
\begin{equation}\label{weakvsum}
\begin{aligned}
&\int\limits_{\Omega^o}|\nabla{\bf v}^o|^2\,dx+s^2\int\limits_{\Omega^i}|{\bf v}^i|^2\,dx+\int\limits_{\Omega^i}|\nabla{\bf v}^i|^2\,dx+\\
&+\int\limits_{\Omega^o}\left(Z_{+}C^o_{+}-Z_{-}C^o_{-}\right)\nabla{\varphi^o}{\bf v}^o\,dx+\int\limits_{\Omega^i}\left(Z_{+}C^i_{+}-Z_{-}C^i_{-}-\sigma\right)\nabla{\varphi^i}{\bf v}^i\,dx+\\
&\int\limits_{0}^{2\pi}p^o|_{r=\frac{1}{\gamma}} U\cos\theta\,d\theta+(m+1)\int\limits_{0}^{2\pi}\frac{\partial {({\bf v}^i)^2} }{\partial r}|_{r=1}\,d\theta=\frac{m-1}{m}\int\limits_{0}^{2\pi}p^i{v_r^i}|_{r=1}\,d\theta
\end{aligned}
\end{equation}

\
Consider now (\ref{weakvi}) and (\ref{weakfii}) with $\psi^o=\varphi^o,$ $\psi^i=\varphi^i.$ Taking into the account boundary conditions, one get
\begin{equation}\label{weakfi0o}
\begin{aligned}
&-\delta^2 \int\limits_{\Omega^o}|\nabla{\varphi^o}|^2\,dx+\frac{\delta^2}{2}\int\limits_{0}^{2\pi}\frac{\partial {|{\bf \varphi}^o|^2} }{\partial r}|_{r=\frac{1}{\gamma}}\,d\theta-
\frac{\delta^2}{2}\int\limits_{0}^{2\pi}\frac{\partial {|{\bf \varphi^i}|^2} }{\partial r}|_{r=1}\,d\theta=\\
&=  -\int\limits_{\Omega^o}\left(Z_{+}C^o_{+}-Z_{-}C^o_{-}\right){ \varphi}^o\,dx
\end{aligned}
\end{equation}
and
\begin{equation}\label{5.5}
\begin{aligned}
&-\delta^2 \int\limits_{\Omega^i}|\nabla{\varphi^i}|^2\,dx+
\frac{\delta^2}{2}\int\limits_{0}^{2\pi}\frac{\partial {|{\bf \varphi}^i|^2} }{\partial r}|_{r=1}\,d\theta=\\
&=  -\int\limits_{\Omega^i}\left(Z_{+}C^i_{+}-Z_{-}C^i_{-}-\sigma\right){ \varphi}^i\,dx
\end{aligned}
\end{equation}

Since the function $\varphi^o$ is defined up to the constant, we will look for the solution among the functions with zero mean on the boundary:
\begin{equation}\label{zeromeanfi}
\int\limits_{\partial\Omega^o}{\varphi^o}\,dx=0.
\end{equation}
This assumption implies the validity of Friedrich's-type estimate:
\begin{equation}\label{Frfi}
\int\limits_{\Omega^o}|{\varphi^o}|^2\,dx\leq d^2_{\Omega^o}\int\limits_{\Omega^o}|\nabla{\varphi^o}|^2\,dx, \quad \int\limits_{\Omega^i}|{\varphi^i}|^2\,dx\leq d^2_{\Omega^i}\int\limits_{\Omega^i}|\nabla{\varphi^i}|^2\,dx+d^2_{\Omega^i}\int\limits_{\Gamma^i}|{\varphi^i}|^2\,dx,
\end{equation}
where $d_{\Omega^o},d_{\Omega^i}$ are diameters of the corresponding domains $\Omega^o,$ $\Omega^i.$

Summing identities (\ref{weakfi0o}) and (\ref{5.5}) it is possible to estimate the norms of electric potential via norms of concentrations:

\begin{equation}\label{normsfi}
\begin{aligned}
&\delta^2\left(\int\limits_{\Omega^i}|\nabla {\varphi^i}|^2\,dx+\int\limits_{\Omega^o}|\nabla {\varphi^o}|^2\,dx\right)=\\
&\int\limits_{\Omega^i}\left(Z_{+}C^i_{+}-Z_{-}C^i_{-}-\sigma\right){ \varphi}^i\,dx+\int\limits_{\Omega^o}\left(Z_{+}C^o_{+}-Z_{-}C^o_{-}\right){ \varphi}^o\,dx\leq \\
&\frac{1}{\kappa_1}\int\limits_{\Omega^i}\left(Z_{+}C^i_{+}-Z_{-}C^i_{-}-\sigma\right)^2\,dx+\kappa_1\int\limits_{\Omega^i}\left({ \varphi}^i\right)^2\,dx+\\
&\frac{1}{\kappa_2}\int\limits_{\Omega^o}\left(Z_{+}C^o_{+}-Z_{-}C^o_{-}\right)^2\,dx+\kappa_2\int\limits_{\Omega^o}\left({ \varphi}^o\right)^2\,dx.
\end{aligned}
\end{equation}
where constants $\kappa_i>0$  will be chosen in the sequel. Applying Friedrich's inequalities one gets

 \begin{equation}\label{normsfiL_2}
\begin{aligned}
&(\delta^2d^{-2}_{\Omega^i}-\kappa_1)\|\varphi^i\|^2_{L_2(\Omega^i)}+(\delta^2d^{-2}_{\Omega^o}-\kappa_2)\|\varphi^o\|^2_{L_2(\Omega^o)}\leq \\
&\frac{1}{\kappa_1}\|Z_{+}C^i_{+}-Z_{-}C^i_{-}-\sigma\|^2_{L_2(\Omega^i)} +\frac{1}{\kappa_2}\|Z_{+}C^o_{+}-Z_{-}C^o_{-}\|^2_{L_2(\Omega^o)}
\end{aligned}
\end{equation}
and
 \begin{equation}\label{normsfigradL_2}
\begin{aligned}
&(\delta^2-\kappa_1d^2_{\Omega^i})\|\nabla \varphi^i\|^2_{L_2(\Omega^i)}+(\delta^2-\kappa_2d^{-2}_{\Omega^o})\|\nabla \varphi^o\|^2_{L_2(\Omega^o)}\leq\\
&\frac{1}{\kappa_1}\|Z_{+}C^i_{+}-Z_{-}C^i_{-}-\sigma\|^2_{L_2(\Omega^i)} +\frac{1}{\kappa_2}\|Z_{+}C^o_{+}-Z_{-}C^o_{-}\|^2_{L_2(\Omega^o)}.
\end{aligned}
\end{equation}
Thus,
 \begin{equation}\label{normsfiH1}
\begin{aligned}
&C_{\varphi^i}\|\varphi^i\|^2_{H^1(\Omega^i)}+C_{\varphi^o}\|\varphi^o\|^2_{H^1(\Omega^o)}\leq  \\
&\frac{1}{\kappa_1}\|Z_{+}C^i_{+}-Z_{-}C^i_{-}-\sigma\|^2_{L_2(\Omega^i)} +\frac{1}{\kappa_2}\|Z_{+}C^o_{+}-Z_{-}C^o_{-}\|^2_{L_2(\Omega^o)},
\end{aligned}
\end{equation}
where constants $$C_{\varphi^i}=\min\{\delta^2d^{-2}_{\Omega^i}-\kappa_1,\delta^2-\kappa_1d^2_{\Omega^i}\}, \quad C_{\varphi^o}=\min\{\delta^2d^{-2}_{\Omega^o}-\kappa_2,\delta^2-\kappa_2d^2_{\Omega^o}\}.$$
Fix now $\kappa_1=\frac{\delta^2}{2d^2_{\Omega^i}},$ $\kappa_2=\frac{\delta^2}{2d^2_{\Omega^o}}.$ With this choice one deduce

\begin{equation}\label{normsfiH1constants}
\begin{aligned}
&\frac{\delta^2}{2}\|\varphi^i\|^2_{H^1(\Omega^i)}+\frac{\delta^2}{8(1-1/{\gamma})^2}\|\varphi^o\|^2_{H^1(\Omega^o)}\leq \\
&\frac{2}{\delta^2}\|Z_{+}C^i_{+}-Z_{-}C^i_{-}-\sigma\|^2_{L_2(\Omega^i)} +\frac{8(1-1/{\gamma})^2}{\delta^2}\|Z_{+}C^o_{+}-Z_{-}C^o_{-}\|^2_{L_2(\Omega^o)},
\end{aligned}
\end{equation}

\begin{remark}
Observe that the terms with norms of concentrations in the right-hand side of (\ref{normsfiH1constants}) are neglectful as $\delta\rightarrow\infty,$ i.e. the bigger $\delta$ the less the influence of concentration. Moreover, if $\delta\gg\left(1-\frac{1}{\gamma}\right),$ then the las term in the right-hand side of (\ref{normsfiH1constants}) tends to zero, thus, the influence of concentrations in the outer domain is small.
\end{remark}
\section{Analysis of the systems}

Combining the first and third equations of system (\ref{Pr_in}), one derives that
$$\nabla p^o=\triangle \mathbf{v}^{o}+\delta^2\triangle \varphi^o\nabla \varphi^o;$$
Analogously,
$$\nabla p^i+\sigma\nabla \varphi^i=m\triangle \mathbf{v}^{i}+ms^2\mathbf{v}^{i}+\delta^2\triangle \varphi^i\nabla \varphi^i;$$

Let us consider an axillary boundary-value problem
\begin{equation}\label{auxv}
\begin{cases}
&\nabla p^o=\triangle \mathbf{\bf v}^{o}+\Phi^o, \text { in } \Omega^o,\\
&\nabla (p^i+\sigma\varphi^i)=m\triangle \mathbf{\bf v}^{i}-ms^2{\bf v}^i+\Phi^i, \text { in } \Omega^i,\\
&\nabla\cdot{\bf v}^o=0,\text { in } \Omega^o,\\
&\nabla\cdot{\bf v}^i=0,\text { in } \Omega^i,\\
&{\bf v}^o={\bf U}, \text { on } \Gamma^o,\\
&{\bf v}^i={\bf v}^o, \text { on } \Gamma^i,\\
\end{cases}
\end{equation}
where $\Phi^o\in L_2(\Omega^o;\mathbb{R}^3),\Phi^i\in L_2(\Omega^i;\mathbb{R}^3),$ ${\bf U}\in L_2(\Gamma^o;\mathbb{R}^3).$

One can show in a standard way (see e.g. \cite{Lions}) the existence of the weak solution to this problem. The next Lemmas prove some apriori bounds for the velocity field and pressure. 

\begin{lemma}\label{L1}
The norms of functions ${\bf v}^o,{\bf v}^i\in H^1(\Omega)$  are bounded and satisfy inequalities
\begin{equation}\label{normsv}
\begin{aligned}
&\|{\bf v}^o\|^2\leq C(\|\Phi^o\|^2_{L_2}+\|{\bf U}\|^2_{L_2}+\|{\bf U}^o\|^2_{L_2});\\
&\|{\bf v}^i\|^2\leq C(\|\Phi^i\|^2_{L_2}+\|{\bf U}^i\|^2_{L_2}),
\end{aligned}
\end{equation}
where ${\bf U}^i, {\bf U}^o$ are divergence-free functions such that
$${\bf U}^i|_{\Gamma^i}={\bf v}^o|_{\Gamma^i}, \quad {\bf U}^o|_{\Gamma^o}={\bf U}, {\bf U}^o|_{\Gamma^i}={\bf v}^i|_{\Gamma^i}.$$
Moreover, considering $p^o$ such that
\begin{equation}\label{pressure_mean}
\int\limits_{\partial\Omega^o}p^o\,dx=0
\end{equation}

 the pressure is uniquely defined and satisfy estimates
\begin{equation}\label{normspressure}
\begin{aligned}
&\|\nabla p^o\|^2_{H^{-1}}+\|\nabla {\bf v}^o\|^2_{L_2}\leq C_3\|\Phi^o\|^2_{L_2}, \quad \| p^o\|^2_{L_2}< C_3d^2_{\Omega^o}\|\Phi^o\|^2_{L_2}\\
&\|\nabla (p^i+\sigma \varphi^i)\|^2_{H^{-1}}+m\|\nabla {\bf v}^i\|^2_{L_2}+ms^2\| {\bf v}^i\|^2_{L_2}\leq C_4\|\Phi^i\|^2_{L_2}, \quad \| p^i\|^2_{L_2}< C_3d^2_{\Omega^i}\|\Phi^i\|^2_{L_2}
\end{aligned}
\end{equation}
\end{lemma}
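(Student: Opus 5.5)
The plan is the standard energy method for Stokes/Brinkman systems with inhomogeneous Dirichlet data, applied separately in $\Omega^o$ and $\Omega^i$. First I lift the boundary data. In $\Omega^o$ I take the divergence-free extension ${\bf U}^o$ with ${\bf U}^o|_{\Gamma^o}={\bf U}$ and ${\bf U}^o|_{\Gamma^i}={\bf v}^i|_{\Gamma^i}$. It exists by the Bogovskii/Ladyzhenskaya construction, because the compatibility condition (\ref{bondaryU}) guarantees zero total flux through each boundary component. In $\Omega^i$ I take the analogous ${\bf U}^i$ with ${\bf U}^i|_{\Gamma^i}={\bf v}^o|_{\Gamma^i}$. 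Then I write ${\bf v}^o={\bf w}^o+{\bf U}^o$ and ${\bf v}^i={\bf w}^i+{\bf U}^i$, where ${\bf w}^\kappa\in H^1_0(\Omega^\kappa;\mathbb{R}^3)$ is divergence-free.

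Next I test the weak form of (\ref{auxv}) with ${\bf u}^\kappa={\bf w}^\kappa$. Since ${\bf w}^\kappa$ vanishes on $\partial\Omega^\kappa$ and is solenoidal, the pressure term drops out; in $\Omega^i$ the combination $p^i+\sigma\varphi^i$ is a gradient and so also drops out. This gives
\[
m_\kappa\|\nabla{\bf w}^\kappa\|^2_{L_2}+\mu_\kappa\|{\bf w}^\kappa\|^2_{L_2}=-m_\kappa\int_{\Omega^\kappa}\nabla{\bf U}^\kappa:\nabla{\bf w}^\kappa\,dx-\mu_\kappa\int_{\Omega^\kappa}{\bf U}^\kappa{\bf w}^\kappa\,dx+\int_{\Omega^\kappa}\Phi^\kappa{\bf w}^\kappa\,dx .
\]
I bound the right-hand side with Cauchy--Schwarz and Young's inequality, and control $\|{\bf w}^\kappa\|_{L_2}$ by the Friedrichs inequality (\ref{Friedr}); the boundary term there vanishes. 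Absorbing the $\|\nabla{\bf w}^\kappa\|^2$ terms into the left-hand side gives $\|{\bf w}^\kappa\|^2_{H^1}\le C(\|\Phi^\kappa\|^2_{L_2}+\|{\bf U}^\kappa\|^2_{H^1})$. The triangle inequality then yields (\ref{normsv}). The outer estimate retains $\|{\bf U}\|$ because the lifting norm is controlled by the trace data, and the $L_2$ norms of the liftings in the statement are to be read as their natural $H^1$ norms.

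For the pressure I use the momentum equation as an identity in $H^{-1}$: $\nabla p^o=\Delta{\bf v}^o+\Phi^o$. This gives $\|\nabla p^o\|_{H^{-1}}\le\|\nabla{\bf v}^o\|_{L_2}+C\|\Phi^o\|_{L_2}$, and combined with the energy bound it produces the first inequality of (\ref{normspressure}). Uniqueness and the $L_2$ bound follow from the Nečas inequality $\|q\|_{L_2}\le C\|\nabla q\|_{H^{-1}}$, valid for $q$ modulo constants; the normalisation (\ref{pressure_mean}) fixes the constant. The factor $d^2_{\Omega^o}$ enters through the Friedrichs/Poincaré-type constant. The inner estimate is identical, applied to $p^i+\sigma\varphi^i$ with the extra term $ms^2{\bf v}^i$. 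The bound on $p^i$ alone then follows after subtracting $\sigma\varphi^i$, using the estimate (\ref{normsfiH1constants}) on $\varphi^i$ and absorbing it into the constant.

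I expect the main obstacle to be the coupling through the interface: each lifting depends on the unknown trace of the other subdomain's velocity. To close the argument I would work with the global field ${\bf v}$ on $\Omega$, which is continuous across $\Gamma^i$, and test with ${\bf v}-{\bf U}^\ast$, where ${\bf U}^\ast$ is a single divergence-free extension of ${\bf U}$ to all of $\Omega$. The interface stress terms then cancel by the continuity of $\Sigma$. This gives a joint bound, and the split estimates are read off from it afterwards.
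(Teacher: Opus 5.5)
Your velocity argument is the paper's own. You lift the traces by divergence-free $\mathbf U^o,\mathbf U^i$, test with $\mathbf v^\kappa-\mathbf U^\kappa$ so the pressure drops out, and close with Young's and Friedrichs' inequalities; the paper simply cites Ladyzhenskaya. Reading $\|\mathbf U^\kappa\|$ as $H^1$ norms is indeed necessary. Because (\ref{normsv}) is stated in terms of liftings of the solution's own interface traces, this per-subdomain argument already proves it, so the coupling you call the main obstacle is no obstacle for the lemma as stated.

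The genuine gap is in the pressure part, where you claim more than your steps give. Combining $\|\nabla p^o\|_{H^{-1}}\le\|\nabla\mathbf v^o\|_{L_2}+C\|\Phi^o\|_{L_2}$ with the energy bound gives $\|\nabla p^o\|^2_{H^{-1}}+\|\nabla\mathbf v^o\|^2_{L_2}\le C(\|\Phi^o\|^2_{L_2}+\|\mathbf U^o\|^2_{H^1})$. The lifting term cannot be dropped. If $\Phi^o=0$ and the Dirichlet data on $\partial\Omega^o$ is not a single constant vector, then $\mathbf v^o$ is not constant, so $\|\nabla\mathbf v^o\|_{L_2}>0=C_3\|\Phi^o\|_{L_2}$; the same happens in $\Omega^i$. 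Absorbing $\sigma\varphi^i$ into the constant is not legitimate either: (\ref{normsfiH1constants}) controls $\varphi^i$ by concentration norms, which are unrelated to $\|\Phi^i\|_{L_2}$. Take $\mathbf v^o|_{\Gamma^i}=0$ and $\Phi^i=0$; this occurs even for the later choice $\Phi^i=\delta^2\Delta\varphi^i\nabla\varphi^i$ whenever $\varphi^i$ is harmonic. Then $\mathbf v^i=0$ and $p^i=c-\sigma\varphi^i$, so $\|p^i\|_{L_2}\ge\sigma\|\varphi^i-\overline{\varphi^i}\|_{L_2}>0$ for nonconstant $\varphi^i$, whatever $c$ is, while the claimed bound is $0$.

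The normalisation step also fails. The Ne\v{c}as inequality controls $q$ only modulo constants, $\|q-\overline{q}\|_{L_2}\le C\|\nabla q\|_{H^{-1}}$, with $\overline{q}$ the \emph{volume} mean. The boundary mean in (\ref{pressure_mean}) is not a continuous functional on $L_2$. For example, let $q_\varepsilon=1-\eta_\varepsilon$ with $\eta_\varepsilon=1$ on $\partial\Omega^o$ and supported in an $\varepsilon$-layer. It satisfies (\ref{pressure_mean}) and $\|\nabla q_\varepsilon\|_{H^{-1}}\le C\|\eta_\varepsilon\|_{L_2}=O(\sqrt{\varepsilon})$, yet $\|q_\varepsilon\|_{L_2}\to|\Omega^o|^{1/2}$. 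So you need either the volume-mean normalisation or an $H^1$ bound on $p^o$ via $H^2$-regularity, which is what the paper's appeal to Friedrichs' inequality tacitly requires. The resulting constant is the Ne\v{c}as constant of the annulus, not $C_3d^2_{\Omega^o}$.

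What your method actually proves is (\ref{normspressure}) with $\|\mathbf U^\kappa\|^2_{H^1}$ added on the right, the volume-mean normalisation, and an extra $\sigma^2\|\varphi^i-\overline{\varphi^i}\|^2_{L_2}$ in the bound for $p^i$. The paper's proof, which only cites Solonnikov and Friedrichs, does not address these terms either.

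If you still pursue the global test function $\mathbf v-\mathbf U^\ast$, note three further points:
\begin{enumerate}
\item (\ref{auxv}) contains no stress condition at all.
\item Even with continuity of $\Sigma$, the term $\sigma\int_{\Gamma^i}\varphi^i(\mathbf v-\mathbf U^\ast)\cdot\mathbf n\,dS$ survives, because your inner equation is written for $p^i+\sigma\varphi^i$ while $\Sigma$ contains $p^i$.
\item The $\nabla\mathbf v:\nabla\mathbf u$ form produces $p\mathbf n-m_\kappa\partial_{\mathbf n}\mathbf v$ on $\Gamma^i$ rather than $\Sigma\mathbf n$, so you would need the symmetric-gradient formulation and Korn's inequality.
\end{enumerate}
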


\begin{proof}
There exist  divergence-free vector-functions ${\bf U}^i\in H^1(\Omega^i;\mathbb{R}^3)$  and ${\bf U}^o\in H^1(\Omega^o;\mathbb{R}^3)$ such that
$${\bf U}^i|_{\Gamma^i}={\bf v}^o|_{\Gamma^i}, \quad {\bf U}^o|_{\Gamma^o}={\bf U}, {\bf U}^o|_{\Gamma^i}={\bf U}^i|_{\Gamma^i}.$$
the functions ${\bf v}^{o}-{\bf U}^o$ and ${\bf v}^{i}-{\bf U}^i$ vanish on boundaries $\partial\Omega^o$ and $\partial\Omega^i$ respectively.
Then, we multiply the first equation in (\ref{auxv}) by ${\bf v}^{o}-{\bf U}^o$ and the second equation by ${\bf v}^{i}-{\bf U}^i.$ Due to the described choice of a test-function, the integration by parts give the integral identities where there are no terms with pressure $p^o,$ $p^i:$
$$\int\limits_{\Omega^o}\nabla{\bf v}^o(\nabla{\bf v}^o-\nabla{\bf U}^o)\,dx=\int\limits_{\Omega^o}{\bf \Phi}^o({\bf v}^o-{\bf U}^o)\,dx;$$
$$m\int\limits_{\Omega^i}\nabla{\bf v}^i(\nabla{\bf v}^i-\nabla{\bf U}^i)\,dx+ms^2\int\limits_{\Omega^i}{\bf v}^i({\bf v}^i-{\bf U}^i)\,dx=\int\limits_{\Omega^i}{\bf \Phi}^i({\bf v}^i-{\bf U}^i)\,dx$$
In a standard way (see e.g. the book \cite{Lad} by  Ladyzhenskaya) we arrive at the inequalities (\ref{normsv}).
Inequalities (\ref{normspressure}) follows from Theorem 1.5 in  \cite{Solonnikov} (see also \cite{Temam}) and classical Friedrich's inequality which holds due to the assumption (\ref{pressure_mean}).
\end{proof}
Now we apply Lemma \ref{auxv} to $\Phi^o=\delta^2\triangle \varphi^o\nabla \varphi^o,$ $\Phi^i=\delta^2\triangle \varphi^i\nabla \varphi^i.$
\begin{lemma}\label{normfi}
The norms of velocity satisfy to
\begin{equation}\label{Fi}
\begin{aligned}
&\|{\bf v}^o\|^2_{L_2}+\|{\bf v}^i\|^2_{L_2}\leq  (\|(Z_{+}C^o_{+}-Z_{-}C^o_{-})^2\|_{L_{\infty}}+\|(Z_{+}C^i_{+}-Z_{-}C^i_{-}-\sigma)^2\|_{L_{\infty}})\times\\
&\left(\frac{16\left(1-1/{\gamma}\right)^2}{\delta^4}\|Z_{+}C^o_{+}-Z_{-}C^o_{-}\|^2_{L_{2}}+\frac{4}{\delta^4}\|Z_{+}C^i_{+}-Z_{-}C^i_{-}-\sigma\|^2_{L_{2}}\right)+\\
&+\|{\bf U}\|^2_{L_2}+\|{\bf U}^o\|^2_{L_2}+\|{\bf U}^i\|^2_{L_2}
\end{aligned}
\end{equation}
\end{lemma}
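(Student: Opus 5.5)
The plan is to feed the specific right-hand sides $\Phi^o=\delta^2\triangle\varphi^o\nabla\varphi^o$ and $\Phi^i=\delta^2\triangle\varphi^i\nabla\varphi^i$ into Lemma \ref{L1}. We then control $\|\Phi^\kappa\|_{L_2}$ through the Poisson equation and the potential estimate (\ref{normsfiH1constants}). Adding the two inequalities (\ref{normsv}) gives
$$\|{\bf v}^o\|^2_{L_2}+\|{\bf v}^i\|^2_{L_2}\leq C\left(\|\Phi^o\|^2_{L_2}+\|\Phi^i\|^2_{L_2}+\|{\bf U}\|^2_{L_2}+\|{\bf U}^o\|^2_{L_2}+\|{\bf U}^i\|^2_{L_2}\right),$$
so everything reduces to bounding $\|\Phi^o\|^2_{L_2}+\|\Phi^i\|^2_{L_2}$ by the bracketed product in (\ref{Fi}). (The statement writes the constant as $1$; I would track constants only up to the normalization used in Lemma \ref{L1}.)

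\emph{Step 1 (removing $\delta^2\triangle\varphi$).} By the third equation of (\ref{Pr_in}), $\delta^2\triangle\varphi^o=-(Z_+C^o_+-Z_-C^o_-)$ and $\delta^2\triangle\varphi^i=-(Z_+C^i_+-Z_-C^i_--\sigma)$. Hence $\Phi^o=-(Z_+C^o_+-Z_-C^o_-)\nabla\varphi^o$, and $\Phi^i$ has the analogous form. Pulling the charge density out in $L_\infty$ gives
$$\|\Phi^o\|^2_{L_2}\leq \|(Z_+C^o_+-Z_-C^o_-)^2\|_{L_\infty}\|\nabla\varphi^o\|^2_{L_2},$$
and similarly in $\Omega^i$. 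Adding the two and bounding each $L_\infty$ factor by their sum $M$ gives $\|\Phi^o\|^2+\|\Phi^i\|^2\leq M\left(\|\nabla\varphi^o\|^2_{L_2}+\|\nabla\varphi^i\|^2_{L_2}\right)$. This is exactly where the first factor in (\ref{Fi}) comes from.

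\emph{Step 2 (the gradient norms).} Here we use (\ref{normsfiH1constants}). Dropping the $L_2$ parts of the $H^1$ norms on the left, multiplying through by $2/\delta^2$, and using that $\|\nabla\varphi^o\|^2$ carries the coefficient $\delta^2/(8(1-1/\gamma)^2)$, we get $\|\nabla\varphi^i\|^2\leq \frac{4}{\delta^4}\|Z_+C^i_+-Z_-C^i_--\sigma\|^2_{L_2}+\dots$. One has to be careful with the outer weight when the two domain contributions are separated. Bounding the sum on the left by each summand with its own weight, I expect to obtain precisely
$$\frac{16(1-1/\gamma)^2}{\delta^4}\|Z_+C^o_+-Z_-C^o_-\|^2_{L_2}+\frac{4}{\delta^4}\|Z_+C^i_+-Z_-C^i_--\sigma\|^2_{L_2}.$$
If the cross-terms do not separate cleanly, I would redo the weighted Young step (\ref{normsfi}) directly with $\kappa_1,\kappa_2$ as fixed there and apply Friedrich's inequality (\ref{Frfi}).

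\emph{Main obstacle.} The delicate point is the regularity and bookkeeping rather than the idea. $\Phi^\kappa$ must lie in $L_2$ for Lemma \ref{L1} to apply, and this is only justified once the charge densities are assumed bounded, which is implicit in the $L_\infty$ norms appearing in (\ref{Fi}). The other delicate point is matching the exact numerical weights $16(1-1/\gamma)^2$ and $4$ coming from (\ref{normsfiH1constants}) without losing factors in the step where the left-hand side is split between $\Omega^i$ and $\Omega^o$. Combining Step 1 and Step 2 with the summed Lemma \ref{L1} bound then yields (\ref{Fi}).
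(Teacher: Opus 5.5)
Your proposal is correct and follows the paper's proof essentially step for step: replace $\delta^2\triangle\varphi^\kappa$ using the Poisson equation, pull the squared charge density out in $L_\infty$, bound $\|\nabla\varphi^o\|^2_{L_2}+\|\nabla\varphi^i\|^2_{L_2}$ by (\ref{normsfiH1constants}), and conclude with Lemma \ref{L1}. The paper is no more careful than you about the bookkeeping you flagged: it silently takes the constant of Lemma \ref{L1} to be $1$, and (\ref{normsfiH1constants}) in general gives the bracket in (\ref{Fi}) multiplied by $\max\{1,4(1-1/\gamma)^2\}$, so the exact weights $16(1-1/\gamma)^2/\delta^4$ and $4/\delta^4$ follow only when $(1-1/\gamma)^2\le 1/4$.
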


\begin{proof}

\begin{equation}\label{FiiFio}
\begin{aligned}
&\|\Phi^o\|^2_{L_2}+\|\Phi^i\|^2_{L_2}=\int\limits_{\Omega^o}(\delta^2\triangle \varphi^o\nabla \varphi^o)^2\,dx+\int\limits_{\Omega^i}(\delta^2\triangle \varphi^i\nabla \varphi^i)^2\,dx\leq\\
&\leq\delta^4\|(\Delta \varphi^o)^2\|_{L_{\infty}}\|\nabla \varphi^o\|^2_{L_2}+\delta^4\|(\Delta \varphi^i)^2\|_{L_{\infty}}\|\nabla \varphi^i\|^2_{L_2}\leq\\
&\leq  (\|\left(Z_{+}C^o_{+}-Z_{-}C^o_{-}\right)^2\|_{L_{\infty}}+\|Z_{+}C^i_{+}-Z_{-}C^i_{-}-\sigma^2\|_{L_{\infty}})\times\\
&\left(\frac{16\left(1-1/{\gamma}\right)^2}{\delta^4}\|Z_{+}C^o_{+}-Z_{-}C^o_{-}\|^2_{L_{2}}+\frac{4}{\delta^4}\|Z_{+}C^i_{+}-Z_{-}C^i_{-}-\sigma\|^2_{L_{2}}\right).
\end{aligned}
\end{equation}
To prove (\ref{Fi}) it remains to use Lemma \ref{L1}.
\end{proof}

\begin{lemma}\label{normpr}
The norms of pressure are bounded as follows: 
\begin{equation}\label{pr}
\begin{aligned}
&\| p^o\|^2_{L_2}\leq C_3\left(1-\frac{1}{\gamma}\right)^2
\left(\frac{16\left(1-1/{\gamma}\right)^2}{\delta^4}\|\left(Z_{+}C^o_{+}-Z_{-}C^o_{-}\right)\|^2_{L_{2}}+\frac{4}{\delta^4}\|\left(Z_{+}C^i_{+}-Z_{-}C^i_{-}-\sigma\right)\|^2_{L_{2}}\right),\\
&\| \nabla p^o\|^2_{H^{-1}}\leq C_3
\left(\frac{16\left(1-1/{\gamma}\right)^2}{\delta^4}\|\left(Z_{+}C^o_{+}-Z_{-}C^o_{-}\right)\|^2_{L_{2}}+\frac{4}{\delta^4}\|\left(Z_{+}C^i_{+}-Z_{-}C^i_{-}-\sigma\right)\|^2_{L_{2}}\right),\\
&\| p^i\|^2_{L_2}+\| \nabla p^i\|^2_{H^{-1}}\leq C_4
\left(\frac{16\left(1-1/{\gamma}\right)^2}{\delta^4}\|\left(Z_{+}C^o_{+}-Z_{-}C^o_{-}\right)\|^2_{L_{2}}+\frac{4}{\delta^4}\|\left(Z_{+}C^i_{+}-Z_{-}C^i_{-}-\sigma\right)\|^2_{L_{2}}\right).\\
\end{aligned}
\end{equation}
\end{lemma}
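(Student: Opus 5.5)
The plan is to combine the pressure part of Lemma \ref{L1} with the bound on $\|\Phi^o\|^2_{L_2}+\|\Phi^i\|^2_{L_2}$ obtained in the proof of Lemma \ref{normfi}, taking $\Phi^o=\delta^2\triangle\varphi^o\nabla\varphi^o$ and $\Phi^i=\delta^2\triangle\varphi^i\nabla\varphi^i$ as before.

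\textbf{Step 1 (bound on the forcing).} The right-hand sides of (\ref{pr}) contain no $L_\infty$ factor, whereas (\ref{FiiFio}) does. I will therefore redo this estimate and keep the $\|(\Delta\varphi^\kappa)^2\|_{L_\infty}$ factor as a multiplicative constant; presumably it is intended to be absorbed into $C_3$ and $C_4$. By the Poisson equation in (\ref{Pr_in}),
$$\delta^2\Delta\varphi^o=-(Z_+C^o_+-Z_-C^o_-),\qquad \delta^2\Delta\varphi^i=-(Z_+C^i_+-Z_-C^i_--\sigma).$$
The bound on $\delta^4\|(\Delta\varphi^\kappa)^2\|_{L_\infty}$ is then controlled by the $L_\infty$ norms of the charge densities, which I treat as bounded data. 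The $L_2$ factor $\|\nabla\varphi^\kappa\|^2_{L_2}$ is controlled by (\ref{normsfiH1constants}): dividing by $\delta^2/2$ and by $\delta^2/(8(1-1/\gamma)^2)$ respectively gives exactly the combination
$$\frac{16(1-1/\gamma)^2}{\delta^4}\|Z_+C^o_+-Z_-C^o_-\|^2_{L_2}+\frac{4}{\delta^4}\|Z_+C^i_+-Z_-C^i_--\sigma\|^2_{L_2}=:K,$$
up to numerical factors. This yields $\|\Phi^o\|^2_{L_2}+\|\Phi^i\|^2_{L_2}\le CK$.

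\textbf{Step 2 (outer pressure).} By (\ref{normspressure}),
$$\|\nabla p^o\|^2_{H^{-1}}\le C_3\|\Phi^o\|^2_{L_2}\le C_3K,$$
which is the second line of (\ref{pr}). Under the normalization (\ref{pressure_mean}), Lemma \ref{L1} also gives $\|p^o\|^2_{L_2}\le C_3d^2_{\Omega^o}\|\Phi^o\|^2_{L_2}$. The diameter of the shell $\Omega^o$ is proportional to its thickness $1/\gamma-1$; the statement writes this as $1-1/\gamma$, which agrees only in absolute value and up to the same normalization used in (\ref{normsfiH1constants}). This gives the first line of (\ref{pr}).

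\textbf{Step 3 (inner pressure).} The second line of (\ref{normspressure}) controls $p^i+\sigma\varphi^i$, not $p^i$ itself. I will use the triangle inequality:
$$\|p^i\|_{L_2}\le\|p^i+\sigma\varphi^i\|_{L_2}+\sigma\|\varphi^i\|_{L_2},$$
and the analogous inequality in $H^{-1}$ for the gradients. The first term is bounded by $C_4\|\Phi^i\|^2_{L_2}$ together with Friedrichs' inequality (\ref{Frfi}). The second term is bounded by (\ref{normsfiH1constants}), which gives
$$\|\varphi^i\|^2_{H^1}\le \frac{2}{\delta^2}\cdot\frac{\delta^2}{4}\,K\sim K,$$
and $\|\nabla\varphi^i\|_{H^{-1}}\le\|\varphi^i\|_{L_2}$. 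Adding the two pieces and enlarging $C_4$ to include $\sigma^2$ and $d^2_{\Omega^i}$ gives the third line of (\ref{pr}).

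\textbf{Main obstacle.} The main difficulty is the bookkeeping of constants. The $L_\infty$ factor has to be absorbed, and the $\sigma\varphi^i$ correction has to be estimated by $K$ rather than by $\delta^{-2}K$. I would check this carefully, since the correct power of $\delta$ may differ from the one stated, and if necessary make $C_4$ depend on $\sigma$ and $\delta$.
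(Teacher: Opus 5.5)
Your proposal is correct and follows the paper's route. The paper's proof simply inserts the forcing bound (\ref{FiiFio}) into the pressure estimates (\ref{normspressure}) of Lemma \ref{L1}, and you do the same while making explicit two steps it passes over silently:
\begin{itemize}
\item absorbing the $L_\infty$ factor of (\ref{FiiFio}) into $C_3,C_4$. This is unavoidable: with ${\bf U}=0$, $\|p^o\|^2_{L_2}$ is homogeneous of degree four in the charge densities, while the bracket in (\ref{pr}) is of degree two.
\item passing from $p^i+\sigma\varphi^i$ to $p^i$ by the triangle inequality. The paper instead reads $\|p^i\|_{L_2}$ directly off Lemma \ref{L1}, although the equation only controls $p^i+\sigma\varphi^i$.
\end{itemize}
Your closing worry does not materialize: (\ref{normsfiH1constants}) gives exactly $\|\varphi^i\|^2_{H^1}\le K$ (your factor $\frac{\delta^2}{4}$ should be $\frac{\delta^2}{2}$). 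The $\sigma\varphi^i$ correction therefore carries the same $\delta^{-4}$ weight as the other terms and is absorbed into $C_4$ together with $\sigma^2$.
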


\begin{proof}
The result of the theorem is a direct consequence of Lemma \ref{auxv} and estimates (\ref{FiiFio}).
\end{proof}

\begin{remark}
If the norms $\|\left(Z_{+}C^o_{+}-Z_{-}C^o_{-}\right)\|_{L_{2}}$ and $\|\left(Z_{+}C^i_{+}-Z_{-}C^i_{-}-\sigma\right)\|_{L_{2}}$ have the asymptotics $O(\delta^2),$ then estimates (\ref{pr}) read as follows:

\begin{equation}\label{prdelta}
\begin{aligned}
\| p^o\|^2_{L_2}+\| \nabla p^o\|^2_{H^{-1}}\leq C_3,
\end{aligned}
\end{equation}
where the constant $C_3$ does not depend neither concentrations nor $\delta.$
\end{remark}
\section{Estimates of the permeability}

Let us estimate now the coefficient of a hydrodynamic permeability $L_{11}.$ Rewriting $L_{11}$ in the dimentionless form, one concludes that
$${L}_{11}=-\frac{{U}}{{\nabla} {p}},$$ when  $\varphi^o|_{\theta=0}=\varphi^o|_{\theta=\pi},$ $C^o|_{\theta=0}=C^o|_{\theta=\pi}$ as  $r=\frac{1}{\gamma}.$
The cell pressure gradient $\nabla p$ is defined as  $$\nabla p=-\frac{\overrightarrow{F}}{V_{cell}}.$$
Let $\Sigma$ be the stress tensor given by
$$\Sigma=p^o{\mathbf I}-\frac{1}{2}\left(\nabla {\bf v}^o+ \nabla ({\bf v}^o)^T\right),$$

then $$\overrightarrow{F}=\int\limits_{\Gamma^i}\Sigma\cdot {\overrightarrow{\bf n}}\,dS$$
Hence,

$$\|\overrightarrow{F}\|^2_{L_\infty}=\int\limits_{\Omega^o}(\nabla p^o-\Delta {\bf v}^o)^2\,dx\leq 2(\|\nabla p^o\|^2_{L_2}+\|\Delta {\bf v}^o\|^2_{L_2})$$

According to the boundary-value problem in the outer domain,

$$\|\Delta v^o\|^2_{L_2}\leq 2\left(\|\nabla p^o\|^2_{L_2}+\kappa_1\|\left(Z_{+}C^o_{+}-Z_{-}C^o_{-}\right)\|_{L_2}^2+\frac{1}{\kappa_1}\|\nabla \varphi^o\|^2_{L_2}\right),$$
where $\kappa_1>0$ is an arbitrary constant.
From the other hand, multiplying the third  equation in system (\ref{Pr_in}) by $\varphi^o,$ integrating it over $\Omega^o$ and taking into the account the assumption on zero gradient of $\varphi^o$ on the boundaries of $\Omega^o,$ we deduce that
$$\|\nabla \varphi^o\|^2_{L_2}\leq \frac{1}{\delta^2}\left(\kappa_2\|\left(Z_{+}C^o_{+}-Z_{-}C^o_{-}\right)\|_{L_2}^2+\frac{1}{\kappa_2}\|\varphi^o\|^2_{L_2}\right),$$
where $\kappa_2>0$ is an arbitrary constant.
Finally, we obtain the estimate

$$\|\overrightarrow{F}\|^2_{L_\infty}\leq 4\|\nabla p^o\|^2_{L_2}+\left(2\kappa_1+\frac{2\kappa_2}{\delta^2\kappa_1}\right)\|\left(Z_{+}C^o_{+}-Z_{-}C^o_{-}\right)\|_{L_2}^2+\frac{2}{\delta^2\kappa_1\kappa_2}\|\varphi^o\|^2_{L_2}.$$

Chose $\kappa_1=\kappa_2=\frac{1}{\delta^2}.$ Applying inequality (\ref{normsfiH1constants}) for $\varphi^o$ and estimates for pressure gradient given in Lemma 6.3,  we deduce further:
\begin{equation}\label{norma F}
\begin{aligned}
&\|\overrightarrow{F}\|^2_{L_\infty}\leq C_1\|\left(Z_{+}C^o_{+}-Z_{-}C^o_{-}\right)\|_{L_2}^2+C_2\|\left(Z_{+}C^i_{+}-Z_{-}C^i_{-}-\sigma\right)\|_{L_2}^2,\quad \\
\end{aligned}
\end{equation}
 where
 $$C_1=\max\{\frac{64}{\delta^4}\left(1-\frac{1}{\gamma}\right)^2, \left(\frac{4}{\delta^2}+\frac{2}{\delta}\right), \frac{2}{\delta^4} \},$$
$$C_2=\max\{\frac{16}{\delta^2}, \frac{2}{\delta^4} \} $$
Thus, the norm of permeability coefficient $L_{11}$ satisfies the estimates
\begin{equation}\label{norma L11}
\begin{aligned}
&\|L_{11}\|^2_{L_\infty}\leq \frac{4U}{3\gamma^3}\left(C_1\|\left(Z_{+}C^o_{+}-Z_{-}C^o_{-}\right)\|_{L_2}^2+C_2\|\left(Z_{+}C^i_{+}-Z_{-}C^i_{-}-\sigma\right)\|_{L_2}^2\right)^{-1},\quad \\
\end{aligned}
\end{equation}
Consider different limiting cases depending on $\delta.$

\begin{itemize}
\item $\|\left(Z_{+}C^o_{+}-Z_{-}C^o_{-}\right)\|_{L_2}^2=O(\delta^\alpha),\|\left(Z_{+}C^i_{+}-Z_{-}C^i_{-}-\sigma\right)\|_{L_2}^2=O(\delta^\alpha),$ \text{where}\\ ${ 0\leq \alpha\leq 4}, $ $\delta<<1:$

$$\|L_{11}\|^2_{L_\infty}\leq \frac{b^3U\delta^{4-\alpha}}{3},  \text{  if }\left(1-\frac{1}{\gamma}\right)^2<\frac{1}{32} ,$$

$$\|L_{11}\|^2_{L_\infty}\leq \frac{b^3U\delta^{4-\alpha}}{3}\left(1+32\left(1-\frac{1}{\gamma}\right)^2\right)^{-1},  \text{  if }\left(1-\frac{1}{\gamma}\right)^2>\frac{1}{32};$$

\item $\|\left(Z_{+}C^o_{+}-Z_{-}C^o_{-}\right)\|_{L_2}^2=O(\delta^\alpha),\|\left(Z_{+}C^i_{+}-Z_{-}C^i_{-}-\sigma\right)\|_{L_2}^2=O(\delta^\alpha),$ \text{where}\\ ${ 0\leq \alpha\leq 2}, $ $\delta>1:$
$$\|L_{11}\|^2_{L_\infty}\leq \frac{2b^3U\delta^{2-\alpha}}{3},  \text{  if }\left(1-\frac{1}{\gamma}\right)^2<\left(\frac{\delta^2}{16}+\frac{\delta^3}{32}\right);$$
$$\|L_{11}\|^2_{L_\infty}\leq \frac{b^3U\delta^{2-\alpha}}{12}\left(1+4\delta^{-2}\left(1-\frac{1}{\gamma}\right)^2\right)^{-1},  \text{  if }\left(1-\frac{1}{\gamma}\right)^2>\left(\frac{\delta^2}{16}+\frac{\delta^3}{32}\right);$$

\end{itemize}

\section{Asymptotics for the densities of ion's fluxes}

\begin{theorem}\label{j_estimates}
Let  $\|\left(Z_{+}C^{\kappa}_{+}-Z_{-}C^{\kappa}_{-}-\sigma^{\kappa}\right)\|_{L_2}^2=O(\delta^\alpha), \text{where}\  \alpha >0, $  $\kappa=o,i.$

Then 

$$
\left\|j^{\kappa}_{\pm}\right\|^2_{L_2}\leq \max{\left\lbrace\frac{1}{2}+\frac{1}{2\nu_{\kappa}^2Pe^2},\frac{1}{\nu_{\kappa}Pe}+\frac{1}{2\nu_{\kappa}^2Pe^2}\right\rbrace}\|C^{\kappa}_{\pm}\|^2_{H^1}+$$
$$+\frac{1}{2\nu_{\kappa}Pe}\left\|Z^{\kappa}_{\pm}\right\|^2_{L_2}+\frac{1}{\nu_{\kappa}Pe}\left\|Z^{\kappa}_{\pm}(C^{\kappa})^2_{\pm}\right\|^2_{L_2}+ $$
$$+\max{\left\lbrace\frac{1}{2}+\frac{3}{2\nu_{\kappa}Pe},C_{\kappa}\left(\frac{3}{2\nu_{\kappa}Pe}+\frac{1}{2\nu_{\kappa}^2Pe^2}\right)\right\rbrace}O(\delta^{\alpha-4})+
$$
$$+\left(\frac{1}{2}+\frac{3}{2\nu_{\kappa}Pe}\right)\left(\|U\|_{L_2}^2+\|U^o\|_{L_2}^2+\|U^i\|_{L_2}^2\right)$$

as $\delta<<1,0\leq \alpha\leq 4$ or if $\delta>1, \alpha > 4.$

Here 
$C_{\kappa}=\begin{cases}8(1-\frac{1}{\gamma})^2,  & \text{ if } \kappa=o,\\
2, & \text { if } \kappa=i.
\end{cases}$

In case $\delta>1,$  $0\leq \alpha\leq 4$  or  $\delta<<1, \alpha> 4 $ then 

$$
\left\|j^{\kappa}_{\pm}\right\|^2_{L_2}\leq \max{\left\lbrace\frac{1}{2}+\frac{1}{2\nu_{\kappa}^2Pe^2},\frac{1}{\nu_{\kappa}Pe}+\frac{1}{2\nu_{\kappa}^2Pe^2}\right\rbrace}\|C^{\kappa}_{\pm}\|^2_{H^1}+$$
$$+\frac{1}{2\nu_{\kappa}Pe}\left\|Z^{\kappa}_{\pm}\right\|^2_{L_2}+\frac{1}{\nu_{\kappa}Pe}\left\|Z^{\kappa}_{\pm}(C^{\kappa})^2_{\pm}\right\|^2_{L_2}+ $$
$$+\left(\frac{1}{2}+\frac{3}{2\nu_{\kappa}Pe}\right)\left(O(\delta^{\alpha})+\|U\|_{L_2}^2+\|U^o\|_{L_2}^2+\|U^i\|_{L_2}^2\right)$$

\end{theorem}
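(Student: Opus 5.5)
The plan is to start from the Nernst representation of the flux in (\ref{Pr_in}),
$$j^{\kappa}_{\pm}={\bf v}^{\kappa}C^{\kappa}_{\pm}-\frac{1}{\nu_{\kappa}Pe}\nabla C^{\kappa}_{\pm}\mp\frac{Z_{\pm}}{\nu_{\kappa}Pe}C^{\kappa}_{\pm}\nabla\varphi^{\kappa},$$
square it, and integrate over $\Omega^{\kappa}$. This gives three diagonal terms and three cross terms. I will bound every cross term with Young's inequality $2ab\le a^2+b^2$, which is where the factors $\tfrac12$ and $\tfrac{1}{2\nu_\kappa^2Pe^2}$ in the statement come from. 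The diagonal term $\frac{1}{\nu_\kappa^2Pe^2}\|\nabla C^\kappa_\pm\|^2_{L_2}$ combines with the split pieces into the coefficient $\max\{\tfrac12+\tfrac1{2\nu_\kappa^2Pe^2},\tfrac1{\nu_\kappa Pe}+\tfrac1{2\nu_\kappa^2Pe^2}\}$ in front of $\|C^\kappa_\pm\|^2_{H^1}$.

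For the product terms I will not use $L_\infty$ bounds on $C$. Instead I split each product by Young's inequality again:
\begin{itemize}
\item the convective term is split into $\|{\bf v}^\kappa\|^2_{L_2}$ plus a term in $C$;
\item the electromigration term $\int Z_\pm C_\pm\nabla\varphi\cdot(\ldots)$ is split into $\|\nabla\varphi^\kappa\|^2_{L_2}$ plus $\|Z_\pm (C^\kappa_\pm)^2\|^2_{L_2}$ and $\|Z_\pm\|^2_{L_2}$, which produces the two explicit $Z$-terms with weights $\tfrac1{2\nu_\kappa Pe}$ and $\tfrac1{\nu_\kappa Pe}$.
\end{itemize}
Collecting the coefficients in front of $\|{\bf v}^\kappa\|^2_{L_2}$ and of $\|\nabla\varphi^\kappa\|^2_{L_2}$ gives $\tfrac12+\tfrac3{2\nu_\kappa Pe}$ and $C_\kappa$-weighted multiples of $\tfrac3{2\nu_\kappa Pe}+\tfrac1{2\nu_\kappa^2Pe^2}$, respectively.

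Next I substitute the earlier bounds.
\begin{itemize}
\item For $\|\nabla\varphi^\kappa\|^2_{L_2}$ I use (\ref{normsfiH1constants}). With the charge-density norms equal to $O(\delta^\alpha)$, it gives $\|\nabla\varphi^\kappa\|^2\le C_\kappa\delta^{-4}O(\delta^\alpha)=C_\kappa O(\delta^{\alpha-4})$, with $C_o=8(1-1/\gamma)^2$ and $C_i=2$ read off from the two sides of (\ref{normsfiH1constants}).
\item For the velocity I use Lemma \ref{normfi}. Its right-hand side is the $\delta^{-4}O(\delta^\alpha)$ term multiplied by $L_\infty$ norms of the charge density, plus $\|U\|^2+\|U^o\|^2+\|U^i\|^2$. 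Treating the $L_\infty$ factor as a constant absorbed into the $O$, the velocity contributes $O(\delta^{\alpha-4})$ plus the boundary-data terms.
\end{itemize}

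Finally I split into the two regimes by comparing $\delta^{\alpha-4}$ with $\delta^\alpha$ (up to constants).
\begin{itemize}
\item If $\delta\ll1$ and $0\le\alpha\le4$, or $\delta>1$ and $\alpha>4$, the dominant contribution is $O(\delta^{\alpha-4})$. Taking the maximum of the two coefficient groups gives the first estimate.
\item In the complementary cases $\delta^{\alpha-4}\lesssim\delta^{\alpha}$, so the potential and velocity contributions are absorbed into $O(\delta^\alpha)$ with coefficient $\tfrac12+\tfrac3{2\nu_\kappa Pe}$. This gives the second estimate.
\end{itemize}

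The main obstacle is the bookkeeping of the cross terms, specifically choosing the Young splittings so that exactly the stated coefficients appear. A secondary obstacle is justifying that the $L_\infty$ factor in Lemma \ref{normfi} is harmless. I would first try the symmetric splitting $2ab\le a^2+b^2$ everywhere and adjust the weights only if the constants do not match.
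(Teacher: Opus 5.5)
Your route is the same as the paper's: square the Nernst formula, integrate, split cross terms by $ab\le\frac12(a^2+b^2)$, insert (\ref{normsfiH1constants}) and Lemma \ref{normfi}, then compare powers of $\delta$. However, the central splitting step fails. After squaring, the terms you must control are quartic or cubic in the unknowns:
$\int_{\Omega^\kappa}|{\bf v}^\kappa|^2(C^\kappa_\pm)^2\,dx$, $\frac{Z_\pm^2}{\nu_\kappa^2Pe^2}\int_{\Omega^\kappa}(C^\kappa_\pm)^2|\nabla\varphi^\kappa|^2\,dx$, $\frac{2Z_\pm}{\nu_\kappa Pe}\int_{\Omega^\kappa}(C^\kappa_\pm)^2\,{\bf v}^\kappa\cdot\nabla\varphi^\kappa\,dx$ and $\frac{2}{\nu_\kappa Pe}\int_{\Omega^\kappa}C^\kappa_\pm\,{\bf v}^\kappa\cdot\nabla C^\kappa_\pm\,dx$.
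Your target is a sum of quadratic quantities $\|{\bf v}^\kappa\|^2_{L_2}$, $\|\nabla\varphi^\kappa\|^2_{L_2}$, $\|C^\kappa_\pm\|^2_{H^1}$ with purely numerical weights. No choice of weights in Young's inequality bridges this gap. Pointwise, Young only gives things like $|{\bf v}|^2C^2\le\frac12|{\bf v}|^4+\frac12C^4$, i.e.\ $L_4$ norms. Concretely, $\int|{\bf v}|^2C^2\le A\|{\bf v}\|^2_{L_2}+(\text{terms not involving }{\bf v})$ fails for every fixed $A$: take $C^2>A$ on the support of ${\bf v}$, replace ${\bf v}$ by $\lambda{\bf v}$, and let $\lambda\to\infty$. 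The same obstruction hits the $\nabla\varphi^\kappa$ terms, because (\ref{normsfiH1constants}) controls $\nabla\varphi^\kappa$ only in $L_2$. What is true is $\int|{\bf v}|^2C^2\le\|C\|^2_{L_\infty}\|{\bf v}\|^2_{L_2}$, and similarly for the other terms. So an honest version of your argument needs exactly the $L_\infty$ bounds on $C^\kappa_\pm$ that you set aside. It then produces factors $\|C^\kappa_\pm\|^2_{L_\infty}$ in front of $\|{\bf v}^\kappa\|^2_{L_2}$ and $\|\nabla\varphi^\kappa\|^2_{L_2}$, not the stated numerical coefficients. The weight $\frac12+\frac{1}{2\nu_\kappa^2Pe^2}$ on $\|C^\kappa_\pm\|^2_{L_2}$ only arises from the invalid splittings. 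Using $H^1\subset L_4$ instead gives products such as $\|C\|^2_{H^1}\|{\bf v}\|^2_{H^1}$, again not the additive form. Likewise, absorbing $\|Z_+C_+-Z_-C_--\sigma_\kappa\|_{L_\infty}$ from Lemma \ref{normfi} into the $O(\cdot)$ is not licensed by the hypothesis, which controls only the $L_2$ norm.

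The paper's own proof makes the same move, so following it more closely will not close the gap. It splits these quartic terms into quadratic norms and uses the identity $C\nabla C=\frac12|\nabla C|^2$, which is wrong. The correct identity is $C\nabla C=\frac12\nabla(C^2)$. Together with $\nabla\cdot{\bf v}^\kappa=0$, it turns the convective cross term into the boundary integral $-\frac{1}{\nu_\kappa Pe}\int_{\partial\Omega^\kappa}(C^\kappa_\pm)^2\,{\bf v}^\kappa\cdot{\bf n}\,dS$, which is still not controlled by anything in the statement. The stated coefficients therefore do not follow from this method.

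Your regime split has a separate error. The ratio $\delta^{\alpha-4}/\delta^{\alpha}=\delta^{-4}$ does not depend on $\alpha$, so for $\delta\ll1$ one has $\delta^{\alpha-4}\gg\delta^{\alpha}$ for every $\alpha$. In the case $\delta\ll1$, $\alpha>4$, your claim $\delta^{\alpha-4}\lesssim\delta^{\alpha}$ is false, and an $O(\delta^{\alpha-4})$ bound cannot yield the second estimate there. The threshold $\alpha\lessgtr4$ only decides whether $\delta^{\alpha-4}$ itself is large or small.
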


\begin{proof}
Integrating the squared formula for ion flux densities from (\ref{Pr_in}), one can obtain the following integral equation:

$$\int\limits_{\Omega^{\kappa}}|j^{\kappa}_{\pm}|^2\,dx=\int\limits_{\Omega^{\kappa}}|C^{\kappa}_{\pm}|^2|{\bold v}^{\kappa}|^2\,dx+\frac{1}{\nu^2_{\kappa}Pe^2}\int\limits_{\Omega^{\kappa}}|C^{\kappa}_{\pm}|^2Z^2_{\pm}|\nabla\varphi^{\kappa}|^2\,dx+\frac{1}{\nu^2_{\kappa}Pe^2}\int\limits_{\Omega^{\kappa}}|\nabla C^{\kappa}_{\pm}|^2\,dx-$$
$$-\frac{2}{\nu_{\kappa}Pe}\int\limits_{\Omega^{\kappa}}{\bold v}^{\kappa}C^{\kappa}_{\pm}\nabla C^{\kappa}_{\pm}\,dx-\frac{2}{\nu_{\kappa}Pe}\int\limits_{\Omega^{\kappa}}{\bold v}^{\kappa}(C^{\kappa}_{\pm})^2\nabla {\varphi}^{\kappa}Z_{\pm}\,dx+\frac{2}{\nu_{\kappa}Pe}\int\limits_{\Omega^{\kappa}}C^{\kappa}_{\pm}\nabla C^{\kappa}_{\pm}\nabla {\varphi}^{\kappa}Z_{\pm}\,dx.$$

By using a standard inequality $\int fg\,dx\leq \frac{1}{2}\left(\int f^2\,dx+ \int g^2\,dx\right)$ and observation that $C^{\kappa}_{\pm}\nabla C^{\kappa}_{\pm}=\frac{1}{2}|\nabla C^{\kappa}_{\pm}|^2,$ we proceed the estimation

$$\int\limits_{\Omega^{\kappa}}|j^{\kappa}_{\pm}|^2\,dx\leq\left(\frac{1}{2}+\frac{1}{2\nu^2_{\kappa}Pe^2}\right)\int\limits_{\Omega^{\kappa}}|C^{\kappa}_{\pm}|^2\,dx+\left(\frac{1}{\nu_{\kappa}Pe}+\frac{1}{2\nu^2_{\kappa}Pe^2}\right)\int\limits_{\Omega^{\kappa}}|\nabla C^{\kappa}_{\pm}|^2\,dx+$$
$$+\left(\frac{1}{2}+\frac{3}{2\nu_{\kappa}Pe}\right)\int\limits_{\Omega^{\kappa}}| {\bold v}^{\kappa}_{\pm}|^2\,dx+\left(\frac{1}{2\nu^2_{\kappa}Pe^2}+\frac{3}{2\nu_{\kappa}Pe}\right)\int\limits_{\Omega^{\kappa}}| \nabla{\varphi}^{\kappa}_{\pm}|^2\,dx+$$
$$+\frac{1}{2\nu_{\kappa}Pe}\int\limits_{\Omega^{\kappa}}Z^2_{\pm}\,dx+\frac{1}{\nu_{\kappa}Pe}\int\limits_{\Omega^{\kappa}}Z^2_{\pm}(C^{\kappa}_{\pm})^4\,dx.$$

Having in mind estimates (\ref{weakvsum}) and (\ref{normsfigradL_2}) for the velocity and electric potential, we derive

$$
\left\|j^{\kappa}_{\pm}\right\|^2_{L_2}\leq \max{\left\lbrace\frac{1}{2}+\frac{1}{2\nu_{\kappa}^2Pe^2},\frac{1}{\nu_{\kappa}Pe}+\frac{1}{2\nu_{\kappa}^2Pe^2}\right\rbrace}\|C^{\kappa}_{\pm}\|^2_{H^1}+$$
$$+\frac{1}{2\nu_{\kappa}Pe}\left\|Z^{\kappa}_{\pm}\right\|^2_{L_2}+\frac{1}{\nu_{\kappa}Pe}\left\|Z^{\kappa}_{\pm}(C^{\kappa})^2_{\pm}\right\|^2_{L_2}+ $$
$$+\left(\frac{1}{2}+\frac{3}{2\nu_{\kappa}Pe}\right)\left(\|Z_{+}C^{o}_{+}-Z_{-}C^{o}_{-}\|_{L_{\infty}}^2+\|Z_{+}C^{i}_{+}-Z_{-}C^{i}_{-}-\sigma\|_{L_{\infty}}^2\right)\times
$$
$$\times\left(\frac{16\left(1-\frac{1}{\gamma}\right)^2}{\delta^4}\|Z_{+}C^{o}_{+}-Z_{-}C^{o}_{-}\|_{L_2}^2+\frac{4}{\delta^4}\|Z_{+}C^{i}_{+}-Z_{-}C^{i}_{-}-\sigma\|_{L_2}^2+
\|U\|_{L_2}^2+\|U^o\|_{L_2}^2+\|U^i\|_{L_2}^2\right)+$$
$$+\left(\frac{1}{2\nu^2_{\kappa}Pe^2}+\frac{3}{2\nu_{\kappa}Pe}\right)\left(\frac{16\left(1-\frac{1}{\gamma}\right)^2}{\delta^4}\|Z_{+}C^{o}_{+}-Z_{-}C^{o}_{-}\|_{L_2}^2+\frac{8}{\delta^2}\|Z_{+}C^{i}_{+}-Z_{-}C^{i}_{-}-\sigma\|_{L_2}^2\right)$$

\end{proof}

\section{Concluding remarks}
The obtained estimates show the boundedness of unknown solution to the original problem and it dependence on the Debye radius, incoming velocity and geometry of the porous shell. 
In particular, we can observe that the wider the outer layer with Stokes flow, the greater value can take the permeability. Observe that the influence of the gradient concentration, the charge modules and electric potential is neglectable in the limit case when the term $\nu_{\kappa}Pe\rightarrow \infty.$  The Peclet number $Pe$ tends to the infinity as, for instance, the coefficient of diffution $D_0$ approaches to zero. 
Moreover, the different upper bound for the ion flux densities can occur depending on the ratios $\frac{\delta^{\alpha-4}}{Pe^2}$ and on $\frac{\delta^{\alpha}}{Pe}.$ If $\delta^{\alpha}<<Pe\nu_{\kappa}$ as $\delta>1,$  $0\leq \alpha\leq 4$  or  $\delta<<1, \alpha> 4, $ then the norm of ion flux densities  takes the smaller values. A similar observation holds 
for  the case $\delta<<1,0\leq \alpha\leq 4$ or if $\delta>1, \alpha > 4$ when $\delta^{\alpha-4}<<Pe^2\nu^2_{\kappa}.$

\subsection*{Acknowledgements} This work was supported by Russian Science Foundation 20-09-00670. The author thanks professor Anatoly Filippov for fruitful discussions.

{\small
{\em Tallinskaya str. 34,
Moscow, Russia}:
{\em Yulia Koroleva}, HSE (National Research University)
 e-mail: \texttt{yo.koroleva@hse.ru}.

}


{\small
\begin{thebibliography}{999}

\bibitem{Solonnikov}
V.A.Solonnikov, On boundary-value problems for linear parabolic systems of general type,{\it Trudy Mat. Inst. Steklov,} Vol.83 (1965), 1-162.
\bibitem{Temam}
R. Temam, {\it Navier-Stokes equations. Theory and Numerical Analysis,} North-Holland, Amsterdam, New-York, Oxford, 1979.
\bibitem{Lad}
Ladyzhenskaya O. A., {\it The  Mathematical Theory of Problems of Viscous Incompressible Flow,} New York : Gordon and Breach, 1969.
\bibitem{Lions}
Lions, J.L. { \it Some Methods of Solving Non-Linear Boundary Value Problems,} Paris: Dunod-Gauthier-Villars, 1969.  
\bibitem{Filippov1}
A.N. Filippov // Colloid J. 2018. V. 80. P. 716–727.
\bibitem{Filippov2}
 A.N. Filippov // Colloid J. 2018. V. 80. P. 728–738.

\bibitem{Filippov3}
A.N. Filippov // Colloid J. 2021. V. 83. (in press).

\bibitem{Filippov4}
 A.N. Filippov, S.A. Shkirskaya // Colloid J. 2019. Vol. 81. P. 597–606.

\bibitem{rel1} Dieter Bothe, Andr e Fischer, Michel Pierre, and Guillaume Rolland // Global existence for diffusion-electromigration systems in
Space dimension three and higher, {\it Nonlinear Analysis,} 2013
DOI: 10.1016/j.na.2013.12.015

\bibitem{rel2} Peter Constantin, Mihaela Ignatova,  Fizay-Noah Lee, // Nernst–Planck–Navier–Stokes Systems far
from Equilibriu, {\it Arch. Rational Mech. Anal.} 240 (2021) 1147–1168

\bibitem{rel3} Peter Constantin, Mihaela Ignatova,  Fizay-Noah Lee, // Interior Electroneutrality in
Nernst–Planck–Navier–Stokes Systems, {\it Arch. Rational Mech. Anal.} 42 (2021) 1091–1118



\end{thebibliography}}
\end{document}